\documentclass[11pt,a4paper]{article}
\usepackage[T1]{fontenc}
\usepackage[utf8]{inputenc}
\usepackage{lmodern}
\usepackage[margin=28mm]{geometry}
\usepackage{amsmath,amssymb,amsthm}
\usepackage[hidelinks]{hyperref}
\usepackage{xcolor}

\usepackage{authblk}

\numberwithin{equation}{section}
\newtheorem{theorem}{Theorem}[section]
\newtheorem{proposition}[theorem]{Proposition}
\newtheorem{corollary}[theorem]{Corollary}

\newcommand{\R}{\mathbb R}
\newcommand{\dd}{\,\mathrm d}
\DeclareMathOperator{\Var}{Var}
\DeclareMathOperator{\tr}{tr}

\newcommand{\II}{\mathrm{II}}
\newcommand{\mc}{\mathrm H}
\newcommand{\V}{\mathcal V}
\newcommand{\sph}{\mathbb S^{n-1}}
\title{A logarithmic Brascamp--Lieb variance inequality}
\author{ Alexander V. Kolesnikov}

\affil{HSE University, Moscow, Russia}

\date{}

\begin{document}
\maketitle

\begin{abstract}
We prove a strengthened Brascamp--Lieb inequality for even
log-concave measures whose positive sublevel bodies satisfy
the local logarithmic Brunn--Minkowski inequality.
The conclusion can be written either by adding a rank-one
term to the Hessian of the potential or by adding an explicit
nonnegative remainder to the variance.
The proof uses sublevel volume variations, a one-dimensional
Brascamp--Lieb inequality, and the coarea formula.
The derivative of the potential under dilations gives an
equality function whenever it is square-integrable.
For potentials homogeneous of degree greater than one, we
obtain a sharp radial remainder.
\end{abstract}

\section{Introduction}
\label{sec:introduction}

Throughout, $n\geq2$, and $o$ denotes the origin of $\R^n$.
Let $V\in C^\infty(\R^n)$ be even, with $D^2V>0$, and assume
that its sublevel sets are compact.
Adding a constant, we normalize $V(o)=0$.
The associated log-concave probability measure is
\[
 Z=\int_{\R^n}e^{-V(x)}\dd x,\qquad
 \dd\mu(x)=Z^{-1}e^{-V(x)}\dd x.
\]
For $f\in L^2(\mu)$, write
\[
 \Var_\mu f=\int_{\R^n}f^2\dd\mu-
                  \left(\int_{\R^n}f\dd\mu\right)^2.
\]
The classical Brascamp--Lieb inequality \cite{BL} states that
\[
 \Var_\mu f\leq
 \int_{\R^n}\langle(D^2V)^{-1}\nabla f,\nabla f\rangle\dd\mu
\]
for smooth functions with finite variance and energy.
Restricting this inequality to even functions leads to the
question of whether the inverse Hessian energy admits a
smaller coefficient or an additional remainder.
For the standard Gaussian measure, the best even coefficient
is $1/2$; see, for example, \cite[inequality (10)]{CKLR}.
Strengthened inequalities for homogeneous potentials are
studied in \cite[Section 8]{CKLR} and
\cite[Section 2]{KLR}.
For radial potentials, Cordero-Erausquin and Rotem
\cite[Theorem 3]{CER} prove an improvement with a scalar
matrix correction. We use that inequality to classify the
equality functions.

There is a close relation between variance inequalities and
second variations of geometric concavity. The direct link between the Brascamb--Lieb inequality and 
the Brunn--Minkowski inequality was established by Bobkov and Ledoux \cite{BL}.
Colesanti \cite{C} derives a boundary Poincar\'e inequality
from the Brunn--Minkowski inequality.
Kolesnikov and Milman \cite{KMPN} develop the corresponding
boundary inequalities and volume variations in the weighted
Riemannian setting.
Their work on local $L^p$-Brunn--Minkowski inequalities
\cite{KM} gives the boundary formulation used below.
Related passages from geometric inequalities to inequalities
for convex functions appear in the Wulff-type inequalities
of Mussnig and Ulivelli \cite{MU}.
Armeniakos and Ulivelli \cite[Theorem 1.1]{AU} obtain an
infinitesimal Pr\'ekopa--Leindler inequality that couples a
boundary Poincar\'e form with a Brascamp--Lieb form.

Recent developments include 
functional forms of the 
log-Brunn--Minkowski inequality, see 
Malliaris, Melbourne, Roberto, and Roysdon
\cite{MMRR},
and the weighted marginal inequalities of Cordero-Erausquin and Eskenazis \cite{CEE}

Our assumption concerns the individual sublevel bodies of $V$.
We require each positive sublevel to satisfy the local
logarithmic Brunn--Minkowski inequality.
This is the infinitesimal form of the logarithmic
Brunn--Minkowski inequality introduced by
B\"or\"oczky, Lutwak, Yang, and Zhang \cite{BLYZ}.
Under this hypothesis, Theorem~\ref{thm:rev-main} improves
the classical inverse Hessian energy by a rank-one correction.
Equivalently, it gives an explicit remainder involving the
derivative of $f$ in the direction $(D^2V)^{-1}\nabla V$.

\noindent\textbf{Acknowledgements.}  
The author thanks Jacopo Ulivelli for discussions, careful reading, and help in preparation of  the manuscript.
The  author  gratefully acknowledges the support of the Basic Research Program of HSE University.

The results of this paper were obtained under assistance of Chat GPT-6.

\subsection{The geometric hypothesis}

By a convex body in $\R^n$ we mean a compact convex set with
nonempty interior, and $|K|$ denotes its Lebesgue measure.
It is origin-symmetric if $K=-K$.
For a smooth convex body, we write
$\dd S=\dd\mathcal H^{n-1}|_{\partial K}$ for surface measure.
Let $K$ be an origin-symmetric convex body with smooth boundary
and positive curvature, and let $\nu$ be its outer unit normal.
We identify the second fundamental form $\II$ with the
positive-definite Weingarten map $D_{\partial K}\nu$, and write
$\mc=\tr\II$ for mean curvature.
On $\partial K$, put
\[
 \varphi(x)=\langle x,\nu(x)\rangle,\qquad x\in\partial K.
\]
In terms of the support function
$h_K(\theta)=\max_{y\in K}\langle y,\theta\rangle$, $\theta\in\sph$,
this means $\varphi(x)=h_K(\nu(x))$. For an even function $u\in C^1(\partial K)$, write
$\nabla_{\partial K}u$ for its tangential gradient.
The local logarithmic Brunn--Minkowski inequality for $K$,
in the boundary formulation of
\cite[Proposition 4.6, with $p=0$]{KM}, is
\begin{equation}\label{eq:rev-local-log-bm}
\begin{aligned}
 &\int_{\partial K}\mc u^2\dd S+
       \int_{\partial K}\frac{u^2}{\varphi}\dd S\\
 &\quad\leq\int_{\partial K}
       \langle\II^{-1}\nabla_{\partial K}u,\nabla_{\partial K}u\rangle\dd S
       +\frac{1}{|K|}
        \left(\int_{\partial K}u\dd S\right)^2.
\end{aligned}
\end{equation}
We require this inequality for every even $u\in C^1(\partial K)$.
Its term involving the integral of $u$ allows arbitrary means.

\subsection{The main theorem}

Our initial space of test functions is
\[
 \mathcal T=
 \{f\in C^\infty(\R^n):
       f\text{ is even and constant outside a compact set}\}.
\]
Define the positive-definite matrix
\[
 \V(x)=D^2V(x)+
       \frac{\nabla V(x)\otimes\nabla V(x)}
            {\langle x,\nabla V(x)\rangle},\qquad x\neq o,
\]
and the two energies
\[
\begin{aligned}
 \mathcal E_0(f)
 &=\int_{\R^n}\langle(D^2V)^{-1}\nabla f,\nabla f\rangle\dd\mu,\\
 \mathcal E_{\V}(f)
 &=\int_{\R^n}\langle\V^{-1}\nabla f,\nabla f\rangle\dd\mu.
\end{aligned}
\]
Strict convexity and evenness give
$\langle x,\nabla V(x)\rangle>0$ for $x\neq o$.
We assign $\V(o)=D^2V(o)$; the value at this single point does
not affect the energy integrals.
We denote by $\mathcal D_0$ and $\mathcal D_{\V}$ the closures of
$\mathcal T$ in the respective form norms
\[
 \bigl(\|f\|_{L^2(\mu)}^2+\mathcal E_0(f)\bigr)^{1/2},
 \qquad
 \bigl(\|f\|_{L^2(\mu)}^2+\mathcal E_{\V}(f)\bigr)^{1/2}.
\]
Since $\V\geq D^2V$, we have
$\mathcal E_{\V}\leq\mathcal E_0$ on $\mathcal T$, and hence
$\mathcal D_0\subseteq\mathcal D_{\V}$.
The forms are closable in the even subspace of $L^2(\mu)$, and
their domains consist of functions with weak gradients.
For functions in these
domains, the gradients and energy integrals below are understood
in this sense.

The form domains allow us to state both versions of the
inequality with all terms finite.

\begin{theorem}[A strengthened even Brascamp--Lieb inequality]
\label{thm:rev-main}
Let $V$ and $\mu$ be as above, and assume that
\eqref{eq:rev-local-log-bm} holds for every sublevel body
$\{V\leq r\}$, $r>0$. Then, for every $f\in\mathcal D_{\V}$,
\begin{equation}\label{eq:rev-matrix}
 \Var_\mu f\leq
 \int_{\R^n}
 \left\langle
 \left(D^2V+\frac{\nabla V\otimes\nabla V}
                    {\langle x,\nabla V\rangle}\right)^{-1}
 \nabla f,\nabla f
 \right\rangle\dd\mu.
\end{equation}
For $f\in\mathcal D_0$, this is equivalently
\begin{equation}\label{eq:rev-remainder}
 \Var_\mu f+\int_{\R^n}
\frac{\langle(D^2V)^{-1}\nabla V,\nabla f\rangle^2}
            {\langle x,\nabla V\rangle+
\langle(D^2V)^{-1}\nabla V,\nabla V\rangle}\dd\mu
 \leq\int_{\R^n}      \langle(D^2V)^{-1}\nabla f,\nabla f\rangle\dd\mu.
\end{equation}
\end{theorem}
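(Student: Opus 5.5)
The plan is to prove \eqref{eq:rev-matrix} for $f\in\mathcal T$ and pass to the closures by density. The equivalence with \eqref{eq:rev-remainder} is pure linear algebra: by Sherman--Morrison, with $A=D^2V$, $b=\nabla V$ and $c=\langle x,\nabla V\rangle$,
\[
 (A+b\otimes b/c)^{-1}=A^{-1}-\frac{A^{-1}b\otimes A^{-1}b}{c+\langle A^{-1}b,b\rangle}.
\]
Substituting this into $\mathcal E_{\V}$ turns \eqref{eq:rev-matrix} into \eqref{eq:rev-remainder} whenever $\mathcal E_0(f)<\infty$. So the work is in \eqref{eq:rev-matrix}.

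\textbf{Step 1: slice along sublevel sets.} Write $K_r=\{V\le r\}$ and use the coarea formula, so that
\[
 \int g\dd\mu=Z^{-1}\int_0^\infty e^{-r}\int_{\partial K_r}\frac{g}{|\nabla V|}\dd S\dd r.
\]
Decompose the variance as
\[
 \Var_\mu f=\int_0^\infty \Var_{\partial K_r}(f)\,\dd m(r)+\Var_m(F),
\]
where $m$ is the law of $V$ under $\mu$, $\Var_{\partial K_r}$ is taken with respect to the normalized weighted surface measure $|\nabla V|^{-1}\dd S$ on $\partial K_r$, and $F(r)$ is the corresponding average of $f$ on $\partial K_r$.

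\textbf{Step 2: the fibre term.} On $\partial K_r$ we have $\nu=\nabla V/|\nabla V|$ and $\II=D^2_{\partial}V/|\nabla V|$ (the tangential block of the Hessian), and $\varphi=\langle x,\nabla V\rangle/|\nabla V|$. The idea is to apply \eqref{eq:rev-local-log-bm} on $K_r$ to $u=f\,\psi$ for a suitable even weight $\psi$, most naturally $\psi=|\nabla V|^{-1}$ or its square root. The goal is to control the fibre variance by the tangential inverse-Hessian energy plus a boundary defect involving $\mc$ and $1/\varphi$. This defect is also what appears when one differentiates $\int_{\partial K_r}g\,|\nabla V|^{-1}\dd S$ in $r$, through the first variation of sublevel volumes: $\frac{\dd}{\dd r}|K_r|=\int_{\partial K_r}|\nabla V|^{-1}\dd S$. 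The mean $(\int u)^2/|K_r|$ term absorbs the fibre mean.

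\textbf{Step 3: the radial term.} The one-dimensional part $\Var_m(F)$ is handled by a one-dimensional Brascamp--Lieb inequality for $m$, whose density is proportional to $e^{-r}\frac{\dd}{\dd r}|K_r|$. Here $F'(r)$ involves the normal derivative $\langle\nabla f,\nabla V\rangle/|\nabla V|^2$ plus the volume-variation terms. Log-concavity of this one-dimensional density, or the explicit weight, should then produce exactly the $1/\langle x,\nabla V\rangle$ rank-one correction.

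\textbf{Step 4: assembly.} Summing the fibre and radial estimates, the combined quadratic form in $(\nabla_{\partial}f,\partial_\nu f)$ has to be dominated pointwise by $\langle\V^{-1}\nabla f,\nabla f\rangle$. This is a $2\times2$ block Schur-complement computation, with the cross terms between the tangential and normal parts of $D^2V$ entering through $\V$.

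The main obstacle is Steps 2--3: choosing the weight $\psi$ and the one-dimensional inequality so that the boundary defect terms from \eqref{eq:rev-local-log-bm} cancel exactly against the $r$-derivative of the weighted surface integrals, and the cross terms assemble into $\V^{-1}$ rather than into something weaker. I would first try the choice $u=f|\nabla V|^{-1/2}$ and check the cancellation on homogeneous $V$ (for example the Gaussian), where equality should hold for the function $\langle x,\nabla V\rangle$.
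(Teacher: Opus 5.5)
Your density reduction and the Sherman--Morrison equivalence are fine and agree with the paper. Steps 2--3, however, are not merely left open: the decomposition you chose cannot yield the sharp inequality.

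The weak point is the radial step. The law $m$ of $V$ under $\mu$ has density proportional to $e^{-r}\frac{\dd}{\dd r}|K_r|$. This density need not be log-concave, and even when it is, its Brascamp--Lieb inequality carries the wrong weight.

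\begin{itemize}
\item For $V=|x|^2/2$ in $\R^2$ one gets $m=\mathrm{Exp}(1)$. Its potential $r$ has zero second derivative, so the one-dimensional Brascamp--Lieb inequality gives no information.
\item For $V=|x|^2/2$ in $\R^n$ with $n\geq3$, $m$ is the Gamma law with shape $a=n/2$. For radial $f=F(V)$ one has $\langle\V^{-1}\nabla f,\nabla f\rangle=V\,F'(V)^2$, so the theorem reads $\Var_m F\leq\int rF'(r)^2\dd m(r)$. Equality holds at $F(r)=2r$, i.e.\ at $f=\langle x,\nabla V\rangle$. Brascamp--Lieb for $m$ gives the weight $r^2/(a-1)$ instead, and for $F(r)=2r$ it yields $4a(a+1)/(a-1)>4a=\Var_m F$.
\end{itemize}

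For such $f$ the fibre variance is zero. Every term produced by the local log-Brunn--Minkowski inequality on the spheres also vanishes, since that inequality is an identity for constants on a ball. So nothing from Step 2 can compensate the loss. The theorem is attained at $f=\langle x,\nabla V\rangle$, so any proof must be lossless there, and the total-variance split along level sets of $V$ is not.

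Step 2 has a separate mismatch: \eqref{eq:rev-local-log-bm} is not a Poincar\'e inequality for $|\nabla V|^{-1}\dd S$.
\begin{itemize}
\item Its weight is $(\mc+1/\varphi)\dd S$.
\item Its mean term is normalized by $|K|$ rather than by $\frac{\dd}{\dd r}|K_r|$.
\item The $\II^{-1}$-energy of $f\psi$ produces cross terms involving $f\nabla_{\partial K}\psi$ that you would have to control.
\end{itemize}

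The missing idea is to split the variance through the layer-cake formula instead of the coarea formula.
\begin{itemize}
\item Perturb the potential to $V+\varepsilon f$ and write $\Var_\mu f=\frac{\dd^2}{\dd\varepsilon^2}\log Z_\varepsilon$, with $Z_\varepsilon=\int_0^\infty|\{V+\varepsilon f\leq r\}|e^{-r}\dd r$.
\item This gives $\Var_\mu f=\int\partial_\varepsilon^2\ell\dd\rho+\Var_\rho(\partial_\varepsilon\ell)$, where $\ell=\log|\{V+\varepsilon f\leq r\}|$ and $\dd\rho\propto|K_r|e^{-r}\dd r$.
\item This $\rho$ is always log-concave, because $\log|K_r|$ is concave by Brunn--Minkowski. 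In the Gaussian example $\partial_\varepsilon\ell\equiv-n$ for $f=2V$, so nothing is lost.
\item Brascamp--Lieb for $\rho$ converts the sum, level by level, into $\sup_c\frac{\dd^2}{\dd\varepsilon^2}\log|\{V+\varepsilon f\leq r+\varepsilon c\}|$.
\item The local inequality is then used to bound this second variation of log-volume, applied to the normal velocity $u=(c-f)/|\nabla V|$. It cancels the $\II^{-1}$-energy of $u$ and the squared first variation outright, so no tangential energy of $u$ ever has to be estimated.
\item The inverse Hessian energy of $f$ comes instead from the normal acceleration in the fixed-normal parametrization: $|\nabla V|v$ equals $\langle(D^2V)^{-1}\nabla f,\nabla f\rangle$ minus a square.
\item Combined with the term $-u^2/\varphi$, the weighted Cauchy--Schwarz inequality eliminates $c$ and produces exactly the rank-one correction.
\end{itemize}
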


Since $\V\geq D^2V$, the energy in
\eqref{eq:rev-matrix} is bounded above by the classical
inverse Hessian energy.
Formula \eqref{eq:rev-remainder} identifies their difference
as a nonnegative integral.
The restriction to $\mathcal D_0$ in this second formulation
ensures that the two energies are finite.

The coefficient one in \eqref{eq:rev-matrix} is optimal
over this class of potentials.
Proposition~\ref{prop:canonical-equality} proves equality for
$f(x)=\langle x,\nabla V(x)\rangle$ under a second-moment
assumption.
For a homogeneous potential, Euler's identity makes the
remainder explicit; see Corollary~\ref{cor:homogeneous-remainder}.

\section{Proof of the main theorem}
\label{sec:proof}

We first express the variance through derivatives of the
logarithm of the sublevel volume.
A one-dimensional Brascamp--Lieb inequality permits a
separate optimization of the level shift at each height.
The local geometric hypothesis estimates the resulting
second variation on each boundary.
Finally, the coarea formula converts these boundary
estimates into an energy inequality on $\R^n$.

\subsection{Volume variations}

We recall the volume variation formulas.
Let $K_\varepsilon$ be a smooth family of convex bodies with
positive curvature, and denote the outer unit normal of
$K_\varepsilon$ by $\nu_\varepsilon$.
For each fixed $x\in\partial K_0$, let
$x_\varepsilon(x)\in\partial K_\varepsilon$ be the point satisfying
\[
 x_0(x)=x,\qquad
 \nu_\varepsilon(x_\varepsilon(x))=\nu_0(x).
\]
A dot denotes differentiation in $\varepsilon$ at zero with the
reference point $x$, and hence the normal direction $\nu_0(x)$,
held fixed. Set
\[
 u(x)=\langle\dot x,\nu_0(x)\rangle,\qquad
 v(x)=\langle\ddot x,\nu_0(x)\rangle.
\]
In the following formulas, $\II$ and $\mc$ refer to the
boundary $\partial K_0$. The first and second variations are
\begin{equation}\label{eq:rev-volume-variations}
\begin{aligned}
 \left.\frac{\dd}{\dd\varepsilon}|K_\varepsilon|
       \right|_{\varepsilon=0}
 &=\int_{\partial K_0}u\dd S,\\
 \left.\frac{\dd^2}{\dd\varepsilon^2}|K_\varepsilon|
       \right|_{\varepsilon=0}
 &=\int_{\partial K_0}
       \left(v+\mc u^2-
       \langle\II^{-1}\nabla_{\partial K_0}u,
                      \nabla_{\partial K_0}u\rangle\right)\dd S.
\end{aligned}
\end{equation}
These formula are establushed in \cite[proof of Theorem 6.6]{KMPN}; see also
\cite{C} for linear perturbations of the support function.

\subsection{Proof of Theorem~\ref{thm:rev-main}}

\begin{proof}
We first consider a function $f\in\mathcal T$ which is constant
near the origin, by approximation arguments one can show that such functions are dense in the original test class for both form
norms.

Subtracting $f(o)$ changes neither the variance nor the gradient.
We may therefore assume that $f$ vanishes in a neighborhood of
the origin.
For sufficiently small $|\varepsilon|$, the function
$V+\varepsilon f$ is strictly convex. Indeed, $D^2f$ vanishes
outside a compact set, and $D^2V$ has a positive lower bound on
that set. Since $V+\varepsilon f$ is even and vanishes at the
origin, its minimum value is zero. This fixes the lower endpoint
of the layer integral.

For $r\geq0$, define the sublevel volume by
\[
 A(r,\varepsilon)=|\{V+\varepsilon f\leq r\}|.
\]
For $r>0$, define its logarithm by
\[
 \ell(r,\varepsilon)=\log A(r,\varepsilon).
\]
Partial derivatives in the two variables are denoted by
$\partial_r$ and $\partial_\varepsilon$. Also, let
\[
 Z_\varepsilon=\int_{\R^n}e^{-V(x)-\varepsilon f(x)}\dd x,
 \qquad Z_0=Z.
\]
The layer-cake formula gives
\begin{equation}\label{eq:rev-layer}
 Z_\varepsilon=\int_0^\infty A(r,\varepsilon)e^{-r}\dd r.
\end{equation}
It follows that the measure $\rho$ on $(0,\infty)$ defined by
\[
 \dd\rho(r)=\frac{A(r,0)e^{-r}}{Z}\dd r
\]
is a probability measure.

Since $A(r,\varepsilon)=\exp(\ell(r,\varepsilon))$,
\[
 \partial_\varepsilon^2 A(r,\varepsilon)
 =A(r,\varepsilon)
   \left(\partial_\varepsilon^2\ell(r,\varepsilon)
         +\bigl(\partial_\varepsilon\ell(r,\varepsilon)\bigr)^2\right).
\]
Differentiating \eqref{eq:rev-layer} twice and using the definition
of $\rho$ therefore yields
\begin{equation}\label{eq:rev-variance}
\begin{aligned}
 \Var_\mu f
 &=\left.\frac{\dd^2}{\dd\varepsilon^2}
                     \log Z_\varepsilon\right|_{\varepsilon=0}\\
 &=\int_0^\infty\partial_\varepsilon^2\ell(r,0)\dd\rho(r)
   +\Var_\rho\bigl(r\mapsto\partial_\varepsilon\ell(r,0)\bigr).
\end{aligned}
\end{equation}

We next estimate the variance on the right-hand side.
Convexity of $V$ and the ordinary Brunn--Minkowski inequality imply
that $r\mapsto A(r,0)^{1/n}$ is concave. 
Consequently,
\[
 \partial_r^2\ell(r,0)
 \leq-\frac1n\bigl(\partial_r\ell(r,0)\bigr)^2 \le 0
\]
(in fact, one can even show that the right-hand side is strictly negative). 
The potential of $\rho$ is $r-\ell(r,0)$, up to an additive
constant. Applying the one-dimensional Brascamp--Lieb inequality
to $r\mapsto\partial_\varepsilon\ell(r,0)$ gives
\[
 \Var_\rho\bigl(r\mapsto\partial_\varepsilon\ell(r,0)\bigr)
 \leq\int_0^\infty
       \frac{\bigl(\partial_r\partial_\varepsilon\ell(r,0)\bigr)^2}
            {-\partial_r^2\ell(r,0)}\dd\rho(r).
\]
Together with
\eqref{eq:rev-variance}, this gives
\begin{equation}\label{eq:rev-one-dimensional}
\begin{aligned}
 \Var_\mu f
 &\leq\int_0^\infty
   \left(\partial_\varepsilon^2\ell(r,0)
   +\frac{\bigl(\partial_r\partial_\varepsilon\ell(r,0)\bigr)^2}
          {-\partial_r^2\ell(r,0)}\right)\dd\rho(r)\\
 &=\int_0^\infty\sup_{c\in\R}
   \left(\partial_\varepsilon^2\ell(r,0)
       +2c\,\partial_r\partial_\varepsilon\ell(r,0)
       +c^2\partial_r^2\ell(r,0)\right)\dd\rho(r).
\end{aligned}
\end{equation}
The supremum is taken separately at each level $r$. The expression
inside it is a concave quadratic polynomial in $c$, whose maximum
is attained at
$c=-\partial_r\partial_\varepsilon\ell(r,0)/\partial_r^2\ell(r,0)$.

Fix $r>0$ and $c\in\R$, and consider the family
\[
 K_\varepsilon=\{V+\varepsilon f\leq r+\varepsilon c\},
 \qquad K_0=\{V\leq r\}.
\]
For small $|\varepsilon|$, these are smooth origin-symmetric
convex bodies with positive curvature. Since
$|K_\varepsilon|=A(r+\varepsilon c,\varepsilon)$, the chain rule gives
\begin{equation}\label{eq:rev-log-volume}
\begin{aligned}
 &\partial_\varepsilon^2\ell(r,0)
       +2c\,\partial_r\partial_\varepsilon\ell(r,0)
       +c^2\partial_r^2\ell(r,0)\\
 &\quad=\left.\frac{\dd^2}{\dd\varepsilon^2}
                           \log|K_\varepsilon|\right|_{\varepsilon=0}\\
 &\quad=\frac{1}{A(r,0)}
       \left.\frac{\dd^2}{\dd\varepsilon^2}
                           |K_\varepsilon|\right|_{\varepsilon=0}
       -\frac{1}{A(r,0)^2}
        \left(\left.\frac{\dd}{\dd\varepsilon}
                           |K_\varepsilon|\right|_{\varepsilon=0}\right)^2.
\end{aligned}
\end{equation}
We will bound this quantity by applying the local logarithmic
Brunn--Minkowski inequality to $K_0$.

Let $\nu$ denote the outer unit normal of $K_0$.
For $x\in\partial K_0$, let $x_\varepsilon$ be the point of
$\partial K_\varepsilon$ with outer normal $\nu(x)$.
This point depends smoothly on $\varepsilon$ and on $x$, 
It satisfies
\[
 (V+\varepsilon f)(x_\varepsilon)=r+\varepsilon c,\qquad
 \nabla(V+\varepsilon f)(x_\varepsilon)
 =|\nabla(V+\varepsilon f)(x_\varepsilon)|\,\nu(x).
\]
Here $x_0=x$.
Write $\dot x=\left.\frac{\dd}{\dd\varepsilon}
x_\varepsilon\right|_{\varepsilon=0}$ and
$\ddot x=\left.\frac{\dd^2}{\dd\varepsilon^2}
x_\varepsilon\right|_{\varepsilon=0}$, and define
\[
 u(x)=\langle\dot x,\nu(x)\rangle,\qquad
 v(x)=\langle\ddot x,\nu(x)\rangle.
\]
These are the normal velocity and normal component of the
acceleration used in \eqref{eq:rev-volume-variations}.

Differentiating the level equation once gives
\[
 \langle\nabla V(x),\dot x\rangle+f(x)=c.
\]
Since $\nabla V(x)=|\nabla V(x)|\nu(x)$, we obtain
\begin{equation}\label{eq:rev-normal-velocity}
 |\nabla V(x)|\,u(x)=c-f(x).
\end{equation}

To determine the full velocity, differentiate the fixed-normal
equation. Its right-hand side has a fixed direction, so
$D^2V(x)\dot x+\nabla f(x)$ is parallel to $\nabla V(x)$.
Thus $\dot x+(D^2V(x))^{-1}\nabla f(x)$ is a scalar multiple
of $(D^2V(x))^{-1}\nabla V(x)$. Taking its scalar product
with $\nabla V(x)$ determines that multiple:
\begin{equation}\label{eq:rev-full-velocity}
\begin{aligned}
 &\dot x+(D^2V(x))^{-1}\nabla f(x)\\
 &\quad=
 \frac{c-f(x)+\langle(D^2V(x))^{-1}\nabla V(x),\nabla f(x)\rangle}
      {\langle(D^2V(x))^{-1}\nabla V(x),\nabla V(x)\rangle}
      (D^2V(x))^{-1}\nabla V(x).
\end{aligned}
\end{equation}
The denominator is positive: $D^2V(x)>0$ and
$\nabla V(x)\neq0$ on the positive level $\{V=r\}$.

Differentiating the level equation a second time gives
\[
 |\nabla V(x)|\,v(x)
 +\langle D^2V(x)\dot x,\dot x\rangle
 +2\langle\nabla f(x),\dot x\rangle=0.
\]
Using the symmetry of $D^2V(x)$, we complete the square:
\begin{align*}
 &\left\langle
 D^2V(x)\bigl(\dot x+(D^2V(x))^{-1}\nabla f(x)\bigr),
 \dot x+(D^2V(x))^{-1}\nabla f(x)\right\rangle\\
 &\quad=\langle D^2V(x)\dot x,\dot x\rangle
       +\left\langle D^2V(x)\dot x,
                         (D^2V(x))^{-1}\nabla f(x)\right\rangle\\
 &\qquad\quad+\langle\nabla f(x),\dot x\rangle
       +\langle\nabla f(x),(D^2V(x))^{-1}\nabla f(x)\rangle\\
 &\quad=\langle D^2V(x)\dot x,\dot x\rangle
       +2\langle\nabla f(x),\dot x\rangle
       +\langle(D^2V(x))^{-1}\nabla f(x),\nabla f(x)\rangle.
\end{align*}
By \eqref{eq:rev-full-velocity}, the expression on the first line equals
\[
 \frac{
 \bigl(c-f(x)+\langle(D^2V(x))^{-1}\nabla V(x),\nabla f(x)\rangle\bigr)^2}
 {\langle(D^2V(x))^{-1}\nabla V(x),\nabla V(x)\rangle}.
\]
Substituting this identity into the second derivative of the level
equation yields
\begin{equation}\label{eq:rev-normal-acceleration}
\begin{aligned}
 |\nabla V(x)|\,v(x)
 &=\langle(D^2V(x))^{-1}\nabla f(x),\nabla f(x)\rangle\\
 &\quad-
 \frac{
 \bigl(c-f(x)+\langle(D^2V(x))^{-1}\nabla V(x),\nabla f(x)\rangle\bigr)^2}
 {\langle(D^2V(x))^{-1}\nabla V(x),\nabla V(x)\rangle}.
\end{aligned}
\end{equation}

The normal velocity $u$ is smooth and even on $\partial K_0$.
Indeed, \eqref{eq:rev-normal-velocity}, evenness of $f$, and oddness
of $\nabla V$ give
\[
 u(-x)=\frac{c-f(-x)}{|\nabla V(-x)|}
      =\frac{c-f(x)}{|\nabla V(x)|}=u(x).
\]
We may therefore apply \eqref{eq:rev-local-log-bm} to $u$ on $K_0$.
Here $\II$ and $\mc$ refer to $\partial K_0$, and
$\varphi(x)=\langle x,\nu(x)\rangle$. The local inequality gives
\[
\begin{aligned}
 &\int_{\partial K_0}
 \left(\mc u^2-\langle\II^{-1}\nabla_{\partial K_0}u,
                            \nabla_{\partial K_0}u\rangle\right)\dd S\\
 &\quad\leq-\int_{\partial K_0}\frac{u^2}{\varphi}\dd S
       +\frac{1}{A(r,0)}
        \left(\int_{\partial K_0}u\dd S\right)^2.
\end{aligned}
\]
Insert this estimate into the second variation formula
\eqref{eq:rev-volume-variations}, and then into
\eqref{eq:rev-log-volume}. The squared first variation cancels,
and we obtain
\begin{equation}\label{eq:rev-geometric-bound}
\begin{aligned}
 &\partial_\varepsilon^2\ell(r,0)
       +2c\,\partial_r\partial_\varepsilon\ell(r,0)
       +c^2\partial_r^2\ell(r,0)\\
 &\quad\leq\frac{1}{A(r,0)}
       \int_{\partial K_0}\left(v-\frac{u^2}{\varphi}\right)\dd S.
\end{aligned}
\end{equation}
On $\partial K_0=\{V=r\}$,
\[
 \varphi(x)=\frac{\langle x,\nabla V(x)\rangle}{|\nabla V(x)|},
 \qquad
 \frac{u(x)^2}{\varphi(x)}
 =\frac{(f(x)-c)^2}
        {\langle x,\nabla V(x)\rangle\,|\nabla V(x)|}.
\]
The denominator $\langle x,\nabla V(x)\rangle$ is positive for
$x\neq o$, by strict convexity and $\nabla V(o)=0$.
Using \eqref{eq:rev-normal-acceleration}, we conclude that
\begin{equation}\label{eq:rev-two-squares}
\begin{aligned}
 &\partial_\varepsilon^2\ell(r,0)
       +2c\,\partial_r\partial_\varepsilon\ell(r,0)
       +c^2\partial_r^2\ell(r,0)\\
 &\quad\leq\frac{1}{A(r,0)}\int_{\{V=r\}}
 \Bigg(
 \langle(D^2V)^{-1}\nabla f,\nabla f\rangle
 -\frac{(f-c)^2}{\langle x,\nabla V\rangle}\\
 &\hspace{48mm}
 -\frac{\bigl(c-f+\langle(D^2V)^{-1}\nabla V,\nabla f\rangle\bigr)^2}
       {\langle(D^2V)^{-1}\nabla V,\nabla V\rangle}
 \Bigg)\frac{\dd S}{|\nabla V|}.
\end{aligned}
\end{equation}
All functions in this boundary integral are evaluated at its
integration point $x$.

We now combine the two squared terms. For real numbers $s,t$ and
positive numbers $\alpha,\beta$, the weighted Cauchy--Schwarz
inequality states that
\[
 \frac{s^2}{\alpha}+\frac{t^2}{\beta}
 \geq\frac{(s+t)^2}{\alpha+\beta}.
\]
Indeed, apply ordinary Cauchy--Schwarz to
$(s/\sqrt\alpha,t/\sqrt\beta)$ and $(\sqrt\alpha,\sqrt\beta)$.
In \eqref{eq:rev-two-squares}, the two numerators before squaring
sum to $\langle(D^2V)^{-1}\nabla V,\nabla f\rangle$.
The inequality therefore gives, pointwise,
\begin{equation}\label{eq:rev-weighted-cs}
\begin{aligned}
 \frac{(f-c)^2}{\langle x,\nabla V\rangle}
 +\frac{\bigl(c-f+\langle(D^2V)^{-1}\nabla V,\nabla f\rangle\bigr)^2}
    {\langle(D^2V)^{-1}\nabla V,\nabla V\rangle}
 \geq
\frac{\langle(D^2V)^{-1}\nabla V,\nabla f\rangle^2}
      {\langle x,\nabla V\rangle+
    \langle(D^2V)^{-1}\nabla V,\nabla V\rangle}.
\end{aligned}
\end{equation}
For another use of this weighted estimate in a boundary
calculation, see \cite[Lemma 8.1]{KM}.

After this substitution, the right-hand side of
\eqref{eq:rev-two-squares} is independent of $c$.
Taking the supremum over $c$ in \eqref{eq:rev-one-dimensional}
and substituting $\dd\rho(r)=Z^{-1}A(r,0)e^{-r}\dd r$ gives
\[
\begin{aligned}
 \Var_\mu f
 &\leq\frac1Z\int_0^\infty e^{-r}\int_{\{V=r\}}
 \Bigg(
 \langle(D^2V)^{-1}\nabla f,\nabla f\rangle
  -\frac{\langle(D^2V)^{-1}\nabla V,\nabla f\rangle^2}
       {\langle x,\nabla V\rangle+
        \langle(D^2V)^{-1}\nabla V,\nabla V\rangle}
 \Bigg)\frac{\dd S}{|\nabla V|}\dd r.
\end{aligned}
\]
The expression in parentheses is nonnegative. This follows from
Cauchy--Schwarz in the inverse Hessian metric:
\[
 \langle(D^2V)^{-1}\nabla V,\nabla f\rangle^2
 \leq
 \langle(D^2V)^{-1}\nabla V,\nabla V\rangle
 \langle(D^2V)^{-1}\nabla f,\nabla f\rangle.
\]
It vanishes near the origin and outside a compact set, since
$f$ is constant on both regions. The coarea formula thus yields
\[
 \Var_\mu f\leq\int_{\R^n}
 \left(
 \langle(D^2V)^{-1}\nabla f,\nabla f\rangle
 -\frac{\langle(D^2V)^{-1}\nabla V,\nabla f\rangle^2}
       {\langle x,\nabla V\rangle+
        \langle(D^2V)^{-1}\nabla V,\nabla V\rangle}
 \right)\dd\mu.
\]
This proves \eqref{eq:rev-remainder} for the test functions
currently under consideration.

The rank-one inverse formula gives, for $x\neq o$,
\[
 \V^{-1}
 =(D^2V)^{-1}
 -\frac{\bigl((D^2V)^{-1}\nabla V\bigr)
             \otimes\bigl((D^2V)^{-1}\nabla V\bigr)}
       {\langle x,\nabla V\rangle+
        \langle(D^2V)^{-1}\nabla V,\nabla V\rangle}.
\]
Hence the preceding integrand is
$\langle\V^{-1}\nabla f,\nabla f\rangle$, and
\eqref{eq:rev-matrix} follows for test functions constant near
the origin.

We approximate  any $f\in\mathcal T$, 
by functions constant near the origin 
in the matrix form norm.
If $f\in\mathcal D_0$, then $f\in\mathcal D_{\V}$ and the
inverse formula applies almost everywhere to its weak gradient.
The inverse Hessian Cauchy--Schwarz estimate above shows that
the remainder in \eqref{eq:rev-remainder} is bounded by
$\mathcal E_0(f)$, so all terms are finite.
The inverse formula identifies this remainder with
$\mathcal E_0(f)-\mathcal E_{\V}(f)$ and proves that
\eqref{eq:rev-matrix} and \eqref{eq:rev-remainder} are equivalent
on $\mathcal D_0$.
\end{proof}

\subsection{Equality and sharpness}
\label{sec:sharpness}

The function $\langle x,\nabla V(x)\rangle$ is the derivative
at $t=0$ of $V(e^t x)$.
The following proposition shows that it attains equality
whenever it is square-integrable.

\begin{proposition}[Equality under dilations]
\label{prop:canonical-equality}
Assume the hypotheses of Theorem~\ref{thm:rev-main}, and suppose
that the function
\[
 f(x)=\langle x,\nabla V(x)\rangle
\]
belongs to $L^2(\mu)$.
Then $f\in\mathcal D_{\V}$ and equality holds in
\eqref{eq:rev-matrix}. The same is true for every function
$c_0+c_1f$, where $c_0,c_1\in\R$.
In particular, the coefficient one multiplying the right-hand
side of \eqref{eq:rev-matrix} is optimal.
\end{proposition}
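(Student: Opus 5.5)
The plan is to verify equality by a direct computation resting on two identities. The first is the pointwise algebraic fact $\V(x)\,x=\nabla f(x)$. The second is an integration-by-parts identity for the variance. After these, a truncation argument places $f$ in $\mathcal D_{\V}$.

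\emph{Step 1 (pointwise identity).} For $f(x)=\langle x,\nabla V(x)\rangle$ we have $\nabla f=\nabla V+D^2V\,x$. On the other hand, for $x\neq o$,
\[
 \V(x)\,x=D^2V\,x+\nabla V\,\frac{\langle\nabla V,x\rangle}{\langle x,\nabla V\rangle}=D^2V\,x+\nabla V=\nabla f .
\]
Hence $\V^{-1}\nabla f=x$, and
\[
 \langle\V^{-1}\nabla f,\nabla f\rangle=\langle x,\nabla f\rangle=\langle x,\nabla V\rangle+\langle D^2V\,x,x\rangle\geq0 .
\]
So the right-hand side of \eqref{eq:rev-matrix} equals $\int\langle x,\nabla f\rangle\dd\mu$.

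\emph{Step 2 (variance).} Integrating by parts against $e^{-V}$, with $\operatorname{div}(xg)=ng+\langle x,\nabla g\rangle$, gives
\[
 \int f\dd\mu=\int\operatorname{div}(x)\dd\mu=n,\qquad
 \int f^2\dd\mu=\int\operatorname{div}(xf)\dd\mu=n^2+\int\langle x,\nabla f\rangle\dd\mu .
\]
Therefore $\Var_\mu f=\int\langle x,\nabla f\rangle\dd\mu$, which is exactly the energy from Step 1, so equality holds in \eqref{eq:rev-matrix}. For $c_0+c_1f$, both sides scale by $c_1^2$ and are unchanged by adding $c_0$.

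Optimality of the coefficient follows because $f$ is nonconstant. Indeed, $f\geq V$ by convexity, since $0=V(o)\geq V(x)-\langle\nabla V(x),x\rangle$, so $\Var_\mu f>0$. Consequently no coefficient smaller than one can multiply the energy in \eqref{eq:rev-matrix}.

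\emph{Step 3 (domain and justification).} This is the main obstacle: showing $f\in\mathcal D_{\V}$ and justifying the boundary terms in Step 2 using only $f\in L^2(\mu)$. I would truncate $f$ through its own values. Take $\psi_R$ smooth and nondecreasing, with $\psi_R(t)=t$ for $t\leq R$, $\psi_R$ constant for $t\geq R+1$, and $0\leq\psi_R'\leq1$. Set $g_R=\psi_R\circ f$.

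Since $f\geq V$ and the sublevel sets of $V$ are compact, $\{f\leq R+1\}$ is compact. Hence $g_R$ is smooth, even, and constant outside a compact set, so $g_R\in\mathcal T$. Its energy is
\[
 \mathcal E_{\V}(g_R)=\int\psi_R'(f)^2\langle x,\nabla f\rangle\dd\mu ,
\]
by Step 1, and this is monotone in the truncation. The integrations by parts in Step 2 are legitimate for $g_R$, because $xg_R\,e^{-V}$ involves only a compactly supported perturbation of the constant $\psi_R(\infty)$. Applying them to $g_R$, or to the pair $f$, $g_R$, yields uniform control
\[
 \int\psi_R'(f)\langle x,\nabla f\rangle\dd\mu\lesssim\|f\|_{L^2(\mu)}^2+1 .
\]
Monotone convergence then gives $\int\langle x,\nabla f\rangle\dd\mu<\infty$. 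Dominated convergence shows that $g_R\to f$ in the $\V$-form norm, so $f\in\mathcal D_{\V}$. The same limits pass the identities of Steps 1 and 2 from $g_R$ to $f$, which completes the proof.
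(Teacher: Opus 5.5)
Your proof is correct and follows the paper's route: the identity $\V(x)x=\nabla f(x)$, integration by parts giving $\int f\dd\mu=n$ and $\int\langle x,\nabla f\rangle\dd\mu=\int f(f-n)\dd\mu$, and nonconstancy of $f$ for optimality. Your truncation $g_R=\psi_R\circ f$ supplies the membership $f\in\mathcal D_{\V}$, which the paper only alludes to as a cutoff argument; it uses $0\leq g_R\leq f$ and $\psi_R'^2\leq\psi_R'$, and it needs no monotonicity in $R$, since Fatou's lemma gives the same conclusion.
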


\begin{proof}
For $x\neq o$, direct differentiation gives
\[
 \V(x)x
 =D^2V(x)x+\nabla V(x)
 =\nabla f(x).
\]
Thus the matrix energy density of $f$ is
$\langle x,\nabla f(x)\rangle$.
One has
\[
 \int_{\R^n}f\dd\mu=n,\qquad
 \int_{\R^n}\langle x,\nabla f\rangle\dd\mu
 =\int_{\R^n}f(f-n)\dd\mu.
\]
Consequently,
\begin{equation}\label{eq:canonical-equality}
 \mathcal E_{\V}(f)
 =\int_{\R^n}f^2\dd\mu-n^2
 =\Var_\mu f.
\end{equation}
The function $f$ is nonconstant, since $f(o)=0$ and
$f(x)>0$ for $x\neq o$. Hence its variance is positive.
This proves optimality of the coefficient, and the assertion
for $c_0+c_1f$ follows by scaling and translation.
\end{proof}

The finite-moment assumption ensures that this equality function
belongs to the stated form domain. The computation does not
require finite classical inverse Hessian energy.
If that classical energy is finite, the same cutoff argument
places $f$ in $\mathcal D_0$, and equality holds in
\eqref{eq:rev-remainder} as well.


The coefficient of the matrix correction is also optimal
uniformly over the potentials in the theorem. For the standard
Gaussian potential $V(x)=|x|^2/2$, the test function $f=V$
has variance $n/2$. Write $I$ for the identity matrix.
If the matrix correction were multiplied
by a number $\tau\geq0$, its energy would be
\[
 \int_{\R^n}
 \left\langle
 \left(I+\tau\frac{x\otimes x}{|x|^2}\right)^{-1}x,x
 \right\rangle\dd\mu
 =\frac{n}{1+\tau}.
\]
The inequality can therefore hold only when $\tau\leq1$.

\section{Additional remarks}

\subsection{Homogeneous potentials}

For homogeneous potentials, the geometry of the positive
sublevels is determined by a single convex body.
The remainder inequality is equivalent to the local
logarithmic Brunn--Minkowski inequality for that body.

Let $K\subset\R^n$ be an origin-symmetric convex body with
smooth boundary and positive curvature. Its Minkowski functional
is the norm
\[
 \|x\|_K=\inf\{t>0:x\in tK\}.
\]
Fix $\alpha>1$, and consider
\[
 V(x)=\frac{\|x\|_K^\alpha}{\alpha},\qquad
 \dd\mu(x)=Z^{-1}e^{-V(x)}\dd x,\qquad
 Z=\int_{\R^n}e^{-V(x)}\dd x.
\]
This potential is continuous at the origin and smooth with
positive-definite Hessian away from it.
The coefficient matrices in the energy integrals are defined
for $x\neq o$ and may be assigned arbitrary positive-definite
values at the origin.
It need not satisfy the smoothness assumptions of
Theorem~\ref{thm:rev-main} at the origin.
Throughout this section, $\mathcal E_0$, $\mathcal E_{\V}$,
$\mathcal D_0$, and $\mathcal D_{\V}$ are defined by the same
formulas as before, using this potential and this test class.
The gradients of form limits are understood weakly away from
the origin; the value at that single point does not affect an
energy integral.

\begin{corollary}[The homogeneous radial remainder]
\label{cor:homogeneous-remainder}
\label{cor:homogeneous-equivalence}
For the fixed degree $\alpha>1$, $K$ satisfies
\eqref{eq:rev-local-log-bm} if and only if every
$f\in\mathcal D_0$ satisfies
\begin{equation}\label{eq:homogeneous-remainder}
 \Var_\mu f+
 \int_{\R^n}\frac{\langle x,\nabla f\rangle^2}
                    {\alpha^2(\alpha-1)V}\dd\mu
 \leq\mathcal E_0(f).
\end{equation}
Under these equivalent conditions, \eqref{eq:rev-matrix}
also holds for every $f\in\mathcal D_{\V}$.
The function $f=V$ belongs to $\mathcal D_0$ and gives
equality in \eqref{eq:homogeneous-remainder}.
Consequently, the coefficient one of the remainder is optimal.
\end{corollary}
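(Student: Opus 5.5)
The plan is to reduce everything to Theorem~\ref{thm:rev-main} and Proposition~\ref{prop:canonical-equality} using Euler's identities. First I record the homogeneous identities. Since $V$ is $\alpha$-homogeneous, away from the origin
\[
 \langle x,\nabla V\rangle=\alpha V,\qquad D^2V\,x=(\alpha-1)\nabla V,
\]
so $(D^2V)^{-1}\nabla V=x/(\alpha-1)$. Hence $\langle(D^2V)^{-1}\nabla V,\nabla f\rangle=\langle x,\nabla f\rangle/(\alpha-1)$, and the denominator in \eqref{eq:rev-remainder} is
\[
 \alpha V+\frac{\alpha V}{\alpha-1}=\frac{\alpha^2V}{\alpha-1}.
\]
Therefore the remainder in \eqref{eq:rev-remainder} equals the radial remainder in \eqref{eq:homogeneous-remainder}. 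The rank-one inverse formula still identifies $\mathcal E_0-\mathcal E_{\V}$ with this remainder on $\mathcal D_0$. So \eqref{eq:homogeneous-remainder} on $\mathcal D_0$ is exactly \eqref{eq:rev-remainder} for this potential.

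\emph{Forward direction.} Every sublevel body is a dilate: $\{V\le r\}=(\alpha r)^{1/\alpha}K$. I check that \eqref{eq:rev-local-log-bm} is dilation invariant. Under $K\mapsto\lambda K$, with $u$ transported by $u(\lambda x)=u(x)$, each of the four terms scales like $\lambda^{n-2}$:
\begin{itemize}
\item $\mc\,\dd S\sim\lambda^{n-2}$;
\item $\dd S/\varphi\sim\lambda^{n-2}$;
\item $\II^{-1}\sim\lambda$ and $|\nabla_{\partial K}u|^2\sim\lambda^{-2}$, so the energy term with $\dd S\sim\lambda^{n-1}$ scales like $\lambda^{n-2}$;
\item $(\int u\,\dd S)^2/|K|\sim\lambda^{2(n-1)-n}=\lambda^{n-2}$.
\end{itemize}
So the hypothesis of Theorem~\ref{thm:rev-main} holds for all $r>0$.

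The only remaining issue is that $V$ is not smooth at $o$. I rerun the proof of Theorem~\ref{thm:rev-main}, which after the first reduction only uses test functions constant near the origin. Near $o$ the perturbation $V+\varepsilon f$ is just $V+\text{const}$. Strict convexity for small $\varepsilon$ only needs $D^2V$ bounded below on the compact annulus where $f$ is nonconstant. The layer-cake density $A(r,0)e^{-r}=|K|(\alpha r)^{n/\alpha}e^{-r}$ is log-concave on $(0,\infty)$, so the one-dimensional Brascamp--Lieb step is unchanged. Moreover $\partial_\varepsilon\ell(r,0)$ vanishes for small $r$. Density of such $f$ in both form norms is argued as in the theorem, with the cutoff placed near $o$. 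This gives \eqref{eq:rev-matrix} and \eqref{eq:homogeneous-remainder}.

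\emph{Equality and optimality.} Take $f=V$, which is $\alpha^{-1}\langle x,\nabla V\rangle$. Its classical energy density is $\langle(D^2V)^{-1}\nabla V,\nabla V\rangle=\alpha V/(\alpha-1)$, which is integrable together with $V^2$. A radial cutoff therefore shows $V\in\mathcal D_0$. Proposition~\ref{prop:canonical-equality} then gives equality in \eqref{eq:rev-matrix}. Its proof, namely the identity $\V x=\nabla\langle x,\nabla V\rangle$ and integration by parts, only uses smoothness away from $o$. Through the identification above this is equality in \eqref{eq:homogeneous-remainder}. Since $\Var_\mu V>0$, no larger coefficient is possible in front of the remainder.

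\emph{Converse, the main obstacle.} Assume \eqref{eq:homogeneous-remainder} for all $f\in\mathcal D_0$. I would perturb the equality function: put $f_t=V+t\,w$ with
\[
 w(x)=g(\|x\|_K)\,u(x/\|x\|_K),
\]
where $u$ is even on $\partial K$ and $g$ is a smooth radial profile supported away from $0$. The deficit
\[
 \mathcal E_0(f_t)-\text{remainder}(f_t)-\Var_\mu f_t
\]
is a nonnegative quadratic in $t$ vanishing at $t=0$. Hence its linear term vanishes and its $t^2$ coefficient, a quadratic form $Q(w)$, is nonnegative. Writing $Q(w)$ in polar coordinates $x=s\theta$ adapted to $K$ separates it into radial integrals of $g,g'$ times boundary integrals of $u^2$, $\mc u^2$, $u^2/\varphi$, $\langle\II^{-1}\nabla u,\nabla u\rangle$ and $(\int u)^2$. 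The last one comes from the squared mean in the variance, after using $\partial K$-integrals against the cone measure.

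The hard part is choosing $g$, essentially optimizing the one-dimensional Brascamp--Lieb step. The aim is that the radial coefficients collapse to positive multiples of exactly the combination in \eqref{eq:rev-local-log-bm}. This makes the weighted Cauchy--Schwarz step and the 1D step in the proof of Theorem~\ref{thm:rev-main} nearly tight. I would try $g$ concentrating near a single level $s=1$, with the level shift $c$ realized by adding a multiple of a function of $V$. Passing to the limit then yields \eqref{eq:rev-local-log-bm} for $u$.
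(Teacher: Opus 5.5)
Your forward direction and the equality part are correct and follow the paper. The paper only remarks that the forward implication comes from Theorem~\ref{thm:rev-main} and Euler's identity, and otherwise cites \cite[Theorem 2.4]{KLR}. Your scale-invariance check of \eqref{eq:rev-local-log-bm} and your rerun of the proof near the nonsmooth origin supply details the paper leaves implicit. There is one slip. The coefficient of the remainder is optimal because the remainder of $V$, which equals $\int V\dd\mu/(\alpha-1)=n/(\alpha(\alpha-1))$, is positive; $\Var_\mu V>0$ is what matters for the coefficient of the energy.

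The genuine gap is the converse. You leave it as a plan, and the plan points the wrong way.
\begin{itemize}
\item Perturbing around $V$ gives nothing. $D(f)=\mathcal E_{\V}(f)-\Var_\mu f$ is a nonnegative quadratic form with $D(V)=0$, so $D(V+tw)=t^2D(w)$. Your $Q(w)$ is just the inequality applied to $w$.
\item A profile $g$ concentrated near one level fails. Since $\V^{-1}$ is positive definite and $(2-\alpha)$-homogeneous, $D(w)$ contains a radial term of order $\int g'^2$ times the angular variance of $u$. This is of size $\delta^{-1}$ for a profile of width $\delta$, whereas the variance and all angular terms are $O(\delta)$. Adding a function of $V$ removes only the $\theta$-independent part of $g'u$. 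The limiting inequality is trivially true and says nothing about $K$.
\end{itemize}
To recover \eqref{eq:rev-local-log-bm}, the test function must make both the one-dimensional Brascamp--Lieb step and the weighted Cauchy--Schwarz step of the proof tight. Localization destroys both.

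The right choice is $g(s)=s^\alpha$, i.e.\ $f=VU$ with $U$ even and $0$-homogeneous. Then $V+\varepsilon f=V(1+\varepsilon U)$ is $\alpha$-homogeneous. Its sublevel sets are the dilates $(\alpha r)^{1/\alpha}K_\varepsilon$ of $K_\varepsilon=\{V(1+\varepsilon U)\le 1/\alpha\}$. Hence $Z_\varepsilon=|K_\varepsilon|\int_0^\infty(\alpha r)^{n/\alpha}e^{-r}\dd r$, and $\Var_\mu f$ equals the second derivative of $\log|K_\varepsilon|$ at $\varepsilon=0$. In the notation of the proof, $\partial_\varepsilon\ell(r,0)$ is constant in $r$, so the one-dimensional step is an equality with optimal shift $c=0$. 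Moreover $\langle x,\nabla f\rangle=\alpha f$, which is exactly the equality case of \eqref{eq:rev-weighted-cs} for $c=0$. The normal velocity on $\partial K$ is $u=-\varphi U/\alpha$. Combining \eqref{eq:rev-volume-variations} and \eqref{eq:rev-normal-acceleration} with the $\alpha$-homogeneity of the energy density and the coarea formula gives the exact identity
\[
 \mathcal E_{\V}(VU)-\Var_\mu(VU)=\frac{\delta_K(\varphi U)}{\alpha^2|K|},
\]
where $\delta_K(u)$ is the right-hand side minus the left-hand side of \eqref{eq:rev-local-log-bm}. The whole computation takes place on the single body $K_\varepsilon$, so no approximation of the level-set argument is needed. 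Also $VU\in\mathcal D_0$, since its energy density is $O(V)$ and cutoffs in $V$ near $o$ and at infinity approximate it. Every even $u$ on $\partial K$ has the form $\varphi U$ with $U=u/\varphi$ extended $0$-homogeneously. Therefore \eqref{eq:homogeneous-remainder} on $\mathcal D_0$ yields \eqref{eq:rev-local-log-bm}.
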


The result was proved in 
\cite[Theorem 2.4]{KLR}.
Here the forward implication follows from Theorem~\ref{thm:rev-main} and Euler's
identity.

\subsection{A limitation of the Euclidean Bochner argument}

Let
\[
 L_Eu=\Delta u-\langle\nabla V,\nabla u\rangle,
\]
and write $\|A\|_{\mathrm{HS}}^2=\sum_{i,j}A_{ij}^2$ for
the squared Hilbert--Schmidt norm of a matrix.
For smooth compactly supported $u$, the integrated Euclidean
Bochner identity is
\begin{equation}\label{eq:euclidean-bochner}
 \int_{\R^n}(L_Eu)^2\dd\mu
 =\int_{\R^n}\|D^2u\|_{\mathrm{HS}}^2\dd\mu
       +\int_{\R^n}\langle D^2V\nabla u,\nabla u\rangle\dd\mu.
\end{equation}
See, for example, \cite[Section 4.3, formula (43)]{KLR}.
To obtain the matrix in \eqref{eq:rev-matrix} through the
usual duality argument, one might try to supply its additional
term by the estimate
\begin{equation}\label{eq:false-bochner-estimate}
 \int_{\R^n}\|D^2u\|_{\mathrm{HS}}^2\dd\mu
 \stackrel{?}{\geq}
 \int_{\R^n}\frac{\langle\nabla u,\nabla V\rangle^2}
                    {\langle x,\nabla V\rangle}\dd\mu
 \qquad(u\text{ even}).
\end{equation}
This estimate fails even for a Gaussian potential.
In dimension two, take
\[
 V(x_1,x_2)=\frac{9x_1^2+x_2^2}{2},
 \qquad u(x_1,x_2)=x_1x_2.
\]
The function $u$ is even, and
$\|D^2u\|_{\mathrm{HS}}^2=2$.
With $y_1=3x_1$ and $y_2=x_2$, the second integral in
\eqref{eq:false-bochner-estimate} equals
\[
 \frac{100}{9}\int_{\R^2}
       \frac{y_1^2y_2^2}{y_1^2+y_2^2}
       e^{-|y|^2/2}\frac{\dd y}{2\pi}
 =\frac{100}{9}\cdot2\cdot\frac18
 =\frac{25}{9}>2.
\]
Here the radial second moment of the standard Gaussian in
$\R^2$ is two, and the angular mean of
$\cos^2\theta\sin^2\theta$ is $1/8$.

The same failure occurs for smooth compactly supported even
functions. Indeed, multiply $u$ by a smooth radial cutoff
$\chi_R$ equal to one on the ball of radius $R$ and zero
outside the ball of radius $2R$, with
$|\nabla\chi_R|\leq C/R$ and $|D^2\chi_R|\leq C/R^2$.
The Hessians of $\chi_Ru$ converge to $D^2u$ in $L^2(\mu)$.
Their gradients also converge in the quadratic integral
on the right of \eqref{eq:false-bochner-estimate}, since
Gaussian decay controls the polynomial cutoff errors.
The two integrals therefore tend to $2$ and $25/9$.
Thus this particular strengthening of the Euclidean
Bochner identity cannot give \eqref{eq:rev-matrix}.

\subsection{$p$-Brunn--Minkowski inequality with $p<0$}

Our result  does not cover, for instance, the result of Cordero-Erausquin and Rotem on Breascamb--Lieb type inequalities for radially symmetric log-concave measures.
The reason for this: the Euclidean balls do satisfy 
the strongest $p$-BM inequality, namely with $p=-n$ and we use in our proof 
only the weaker log Brunn--Minkowsmi inequality. However, it is possible to extend our result to potentials with level sets satisfying $p$-BM with negative $p$.


\begin{thebibliography}{99}

\bibitem{AU}
S. Armeniakos and J. Ulivelli,
\emph{A new infinitesimal form of the Pr\'ekopa--Leindler
inequality with multiplicative structure and applications},
preprint, \href{https://arxiv.org/abs/2602.10812v2}
{arXiv:2602.10812v2} (2026).

\bibitem{BL}
 S. G. Bobkov and M. Ledoux, \emph {From Brunn–Minkowski to Brascamp–Lieb and to logarithmic Sobolev inequalities}, Geom. Funct. Anal. 10 (2000), no. 5, 1028–1052. 



\bibitem{BL}
H.~J. Brascamp and E.~H. Lieb,
\emph{On extensions of the Brunn--Minkowski and
Pr\'ekopa--Leindler theorems, including inequalities for log concave
functions, and with an application to the diffusion equation},
J. Funct. Anal. \textbf{22} (1976), no.~4, 366--389.
\href{https://doi.org/10.1016/0022-1236(76)90004-5}
{doi:10.1016/0022-1236(76)90004-5}.

\bibitem{BLYZ}
K.~J. B\"or\"oczky, E. Lutwak, D. Yang, and G. Zhang,
\emph{The log-Brunn--Minkowski inequality},
Adv. Math. \textbf{231} (2012), nos.~3--4, 1974--1997.
\href{https://doi.org/10.1016/j.aim.2012.07.015}
{doi:10.1016/j.aim.2012.07.015}.

\bibitem{C}
A. Colesanti,
\emph{From the Brunn--Minkowski inequality to a class of
Poincar\'e-type inequalities},
Commun. Contemp. Math. \textbf{10} (2008), no.~5, 765--772.
\href{https://doi.org/10.1142/S0219199708002971}
{doi:10.1142/S0219199708002971}.

\bibitem{CKLR}
A. Colesanti, A.~V. Kolesnikov, G. Livshyts, and L. Rotem,
\emph{On weighted Blaschke--Santal\'o and strong Brascamp--Lieb
inequalities},
preprint, \href{https://arxiv.org/abs/2409.11503v2}
{arXiv:2409.11503v2} (2026).


\bibitem{CEE}
D. Cordero-Erausquin and A. Eskenazis, \emph{Concavity principles for weighted marginals}, preprint, arXiv:2506.16941 (2025).


\bibitem{CER}
D. Cordero-Erausquin and L. Rotem,
\emph{Improved log-concavity for rotationally invariant measures
of symmetric convex sets},
Ann. Probab. \textbf{51} (2023), no.~3, 987--1003.
\href{https://doi.org/10.1214/22-AOP1604}
{doi:10.1214/22-AOP1604}.




\bibitem{KLR}
A.~V. Kolesnikov, G. Livshyts, and L. Rotem,
\emph{On $p$-Brunn--Minkowski and Brascamp--Lieb inequalities},
preprint, \href{https://arxiv.org/abs/2507.12099v3}
{arXiv:2507.12099v3} (2026).

\bibitem{KM}
A.~V. Kolesnikov and E. Milman,
\emph{Local $L^p$-Brunn--Minkowski inequalities for $p<1$},
Mem. Amer. Math. Soc. \textbf{277} (2022), no.~1360, v+78 pp.
\href{https://doi.org/10.1090/memo/1360}
{doi:10.1090/memo/1360}.

\bibitem{KMPN}
A.~V. Kolesnikov and E. Milman,
\emph{Poincar\'e and Brunn--Minkowski inequalities on the boundary
of weighted Riemannian manifolds},
Amer. J. Math. \textbf{140} (2018), no.~5, 1147--1185.
\href{https://doi.org/10.1353/ajm.2018.0027}
{doi:10.1353/ajm.2018.0027}.


\bibitem{MMRR}
A. Malliaris, J. Melbourne, C. Roberto, and M. Roysdon, \emph{Functional liftings of restricted geometric inequalities}, preprint, arXiv:2508.15247v3 (2025)


\bibitem{MU}
F. Mussnig and J. Ulivelli,
\emph{Inequalities and counterexamples for functional intrinsic
volumes and beyond},
J. Lond. Math. Soc. (2) \textbf{113} (2026), no.~1,
Art.~e70422.
\href{https://doi.org/10.1112/jlms.70422}
{doi:10.1112/jlms.70422}.

\end{thebibliography}
\end{document}